\documentclass[11pt]{article}
\flushbottom \textheight8.75in \textwidth6.55in \topmargin-.375in
\oddsidemargin-.25in \evensidemargin-.25in \footskip.4in
\parindent.3in
\usepackage{mathrsfs}
\usepackage{esdiff}
\usepackage{amsthm,amsfonts}
\usepackage{amsmath}
\usepackage{amssymb}
\usepackage{graphics,epsfig}
\usepackage{stackrel}

\newtheorem{lemma}{Lemma}[section]

\newenvironment{remark}{\addtocounter{theorem}{1}\vskip 0.2cm{\sc Remark
\thetheorem.}}{\hfill\vskip 0.2cm}

\newcommand{\beq}{\begin{equation}}
\newcommand{\enq}{\end{equation}}

\newcommand{\beqa}{\begin{eqnarray}}
\newcommand{\enqa}{\end{eqnarray}}
\newcommand{\beqas}{\begin{eqnarray*}}
\newcommand{\enqas}{\end{eqnarray*}}

\newtheorem{thm}{Theorem}[section]
\newtheorem{lem}{Lemma}[section]

\newtheorem{cor}{Corollary}[section]



\numberwithin{equation}{section}
\def\lbr{\left}
\def\rbr{\right}

\begin{document}

\title{The Boundary Value Problem for a Static 2D Klein--Gordon Equation in the Infinite Strip and in the Half-Plane\thanks{I am grateful to Prof. Yu.M. Kabanov and to Dr. M. Zhitlukhin for important discussions and help}}
\author{Dmitry Muravey\thanks{dmuravey@hse.ru} \\
International Laboratory of Quantitative Finance, \\
National Research University  Higher School of Economics,\\   Moscow, Russia}
\maketitle

\abstract{
We provide explicit formulas for the Green function of an elliptic PDE in the infinite strip and the  half-plane. They are expressed in  elementary and special functions. Proofs of uniqueness and existence are also given.
}

\section{Motivation}
\par
This research is motivated by the multi-asset American option pricing problem(see e.g. \cite{Broadie}). Using the standard arbitrage theory framework it can be shown that the option prices are bounded by solutions of elliptic PDE problems. These problems have specific features:  the domains are unbounded and the boundary functions are not smooth and do not vanish at infinity. We  reduce the original PDE determining the price to a static Klein--Gordon equation (SKGE) in an  infinite strip (for the alternative dual options) and in the half plane (the exchange and basket options). The boundary conditions  of the corresponding  problems are bounded  H\"older functions. This leads to the following questions:
\begin{enumerate}
\item  Can we explicitly solve the boundary problems for the SKGE?

\item If this is the case, can we construct computer-
friendly representations?

\item  Is the obtained solution classical?

\item Is it unique?
\end{enumerate}
To our knowledge,  these aspects of the mentioned problems have not been studied in the literature. Even the answers to the two last questions seem not be  obvious.  The majority  of the  known results deal with bounded domains and one can not directly apply available theorems to the equations in the infinite strip and in the half-plane. Also, if the boundary conditions are not  twice differentiable, then  the smoothness of  the solution is not clear.
In this paper we give answers to  all four questions.  Our results are as follows:

\begin{enumerate}
\item  The boundary value problem in the infinite strip and in the half-plane allows a closed form solution.
\item   Solutions can be represented in terms of elementary functions or in terms of special functions.
\item   For all bounded boundary functions with H\"older property the solution is classical.
\item   The solution is unique.
\end{enumerate}

We could find in the literature only a few related works. The paper \cite{Danilov} contains  closed form formulas of boundary problem  for the Laplace equation in the infinite strip. In the recent paper \cite{Melnikov} there is a  variety of Green functions for the homogeneous Poisson problems for SKGE in some unbounded domains, including the strip and the half-plane.
\par
The paper is organized as follows. In Section \ref{Sec: Strip} we obtain explicit formulas for the Green function in the case of infinite strip  and formulate the main theorem on existence, uniqueness and closed form of the solution. In Section \ref{Sec: Plane} we address the case of half-plane domain. All proofs are given in Section \ref{Sec: proofs}.

\section{Problem in the infinite strip}
\label{Sec: Strip}
\subsection{Existence and uniqueness theorem}
Let $\Pi^\pi=\mathbb{R} \times [0,\pi]=\left\{ (x,y) \in \mathbb{R}^2,\; y \in [0,\pi] \right\}$.  Let $LV=\Delta V - r^2 V$ be an operator acting on the twice differentiable functions $V=V(x,y)$  defined in the interior of $\Pi^\pi$.  Here $\Delta$ is the Laplacian and $r \in \mathbb{R}$.
We consider the boundary value problem
\begin{equation}
\label{Eq: Main Strip}
    \left\{ {\begin{array}{l}
        LV(x,y) = 0, \quad (x,y) \in {\rm int\,} \Pi^\pi, \\
        V(x,0) = \varphi(x), \quad x \in \mathbb{R},  \\
        V(x,\pi) = 0, \quad x \in \mathbb{R}.  \\
    \end{array}} \right.
\end{equation}
The following theorem claims that under certain assumptions the problem (\ref{Eq: Main Strip}) admits  a unique classical solution and provides an explicit form for it.

\begin{thm}
\label{THM: Strip}
Let $H^\lambda$ be the space of H\"older  functions of order $\lambda >0$ and let $\varphi$  be a bounded function from $H^\lambda$.Then the solution of the problem (\ref{Eq: Main Strip}) exists in the classical sense,  is unique, and allows the  representation
\beq
\label{Eq: Convolution for strip}
V(x,y) = \int_{\mathbb{R}} \varphi(u) G^\pi(x-u,y)du,
\enq
with the Green function
\beq
\begin{split}
\label{Eq: Green function Strip}
G^{\pi}(x,y) = \delta(x)\Theta(-y)+\frac{1}{\pi} \sum_{k=1}^{\infty}\frac{k \sin ky }{\sqrt{k^2+r^2}} e^{-|x|\sqrt{k^2+r^2}}.
\end{split}
\enq
where $\delta(x)$ is the Dirac delta-function and $\Theta(x)=I_{(0,\infty)}(x)$.

\end{thm}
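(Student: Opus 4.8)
The plan is to prove this theorem in three stages: first derive the candidate Green function by separation of variables, then verify that the resulting convolution solves the PDE classically under the Hölder hypothesis, and finally establish uniqueness.

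\textbf{Deriving the representation.} First I would solve the problem by separation of variables on the strip. Seeking solutions of the form $X(x)Y(y)$ to $\Delta V - r^2 V = 0$, the $y$-factor must satisfy $Y'' + \mu Y = 0$ with the boundary conditions $Y(0)=Y(\pi)=0$ forced on the homogeneous part, which selects the eigenfunctions $\sin ky$ for $k=1,2,\dots$. For each mode the $x$-factor satisfies $X'' = (k^2+r^2)X$, and boundedness as $|x|\to\infty$ singles out $e^{-|x|\sqrt{k^2+r^2}}$. The Green function should produce $V(x,0)=\varphi(x)$, i.e. $\int_{\mathbb{R}}\varphi(u)G^\pi(x-u,0)\,du = \varphi(x)$, so $G^\pi(\cdot,0)$ must act as an approximate identity; this is exactly what the $\delta(x)\Theta(-y)$ term encodes, with $\Theta(-y)$ selecting the boundary $y=0$. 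To fix the coefficient $\tfrac{1}{\pi}\tfrac{k}{\sqrt{k^2+r^2}}$, I would expand $\varphi$ (or rather the kernel) in the Fourier sine series on $[0,\pi]$ together with the Fourier transform in $x$: writing the solution as $\sum_k c_k(x)\sin ky$ and matching $\sum_k c_k(x)\sin ky \to \varphi(x)$ formally at $y=0$ via the completeness relation $\tfrac{2}{\pi}\sum_k \sin ky \sin ky' = \delta(y-y')$ pins down the normalization.

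\textbf{Verifying the solution is classical.} With the series in hand, the substance is an analytic estimate. For any fixed $y\in(0,\pi)$ and $x\neq 0$ the factor $e^{-|x|\sqrt{k^2+r^2}}$ decays geometrically in $k$, so the series for $G^\pi$ and all its termwise $x$- and $y$-derivatives converge uniformly on compact subsets of the open strip; dominated convergence then lets me differentiate under both the sum and the integral in \rf{Eq: Convolution for strip}, and since each term is annihilated by $L$, so is $V$. The delicate point is the boundary limit $y\to 0^+$, where the exponential no longer suppresses large $k$ and the kernel degenerates toward the Dirac mass. Here I expect the Hölder hypothesis $\varphi\in H^\lambda$ to be essential: one writes $V(x,y)-\varphi(x)=\int_{\mathbb{R}}(\varphi(u)-\varphi(x))G^\pi(x-u,y)\,du$ plus a term from the $\delta\Theta$ piece, and splits the integral into a neighborhood of $u=x$ (controlled by $|\varphi(u)-\varphi(x)|\le C|u-x|^\lambda$ against the integrable kernel) and a far region (controlled by boundedness of $\varphi$ and rapid decay of the kernel). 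This boundary regularity estimate is the main obstacle, since it requires sharp control of the kernel as $y\to0$, likely via summing the series into a closed elementary form to expose its near-boundary singularity.

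\textbf{Uniqueness.} Finally I would prove uniqueness by a maximum-principle argument adapted to the unbounded strip. If $V_1,V_2$ are two bounded classical solutions, their difference $W=V_1-V_2$ satisfies $LW=0$ with zero boundary data on both edges $y=0$ and $y=\pi$. Because $r^2\ge 0$, the operator $L=\Delta - r^2$ admits a maximum principle forbidding interior positive maxima. The only subtlety is the noncompactness in the $x$-direction: I would introduce a barrier or comparison function that decays in $|x|$ (for instance a multiple of $\cosh(r x)$ type or an exponential envelope compatible with $L$) to rule out mass escaping to $x=\pm\infty$, then let the envelope parameter tend to its limit to force $W\equiv 0$. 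Boundedness of the solutions is what makes this Phragmén–Lindelöf style argument go through.
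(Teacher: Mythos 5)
Your outline gets the boundary limit and uniqueness right in spirit, but the interior-regularity step contains a genuine gap, and it sits exactly where the real difficulty of this theorem lies. You claim that the series in (\ref{Eq: Green function Strip}) ``and all its termwise $x$- and $y$-derivatives converge uniformly on compact subsets of the open strip,'' and you then differentiate under the sum and the integral, asserting that each term is annihilated by $L$. This fails on the line $x=0$: there the general term equals $\frac{k\sin ky}{\sqrt{k^2+r^2}}$, which does not tend to zero, so the series diverges at every point $(0,y)$ with $y\in(0,\pi)$ --- and these points cannot be avoided, because in the convolution (\ref{Eq: Convolution for strip}) the kernel is evaluated at $x-u$ with $u$ sweeping through $x$. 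Worse, each summand contains $|x|$ and so has a corner at $x=0$: it is not even once differentiable across that line, and termwise $L$ produces a singular contribution proportional to $k\sin ky\,\delta(x)$; these contributions cancel only distributionally after summation in $k$, not term by term. So differentiation under the integral near $u=x$ --- and not only the limit $y\to0^+$ --- is a delicate point, and your proposal gives no justification for it. A repair must either replace the series by a representation of the kernel that is regular across $x=0$ (the elementary closed form for $r=0$, or the integral representation of Theorem \ref{THM: Strip 2}), or avoid differentiating the kernel altogether. The paper takes the second route, and this is the main structural difference: it proves interior $C^2$ smoothness by localization (Lemma \ref{Lemma 1}: the Dirichlet problem for $L$ in a small disk with continuous boundary data has a classical solution, proved via modified-Bessel-series estimates, then matched with the convolution, which is continuous by its Proposition 1), and it verifies $L(\varphi*G^{\pi})=0$ first in the weak sense against test functions, using self-adjointness of $L$, upgrading to a pointwise identity by continuity.

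Two smaller remarks. In your boundary-limit argument the identity $V(x,y)-\varphi(x)=\int_{\mathbb{R}}\lbr(\varphi(u)-\varphi(x)\rbr)G^{\pi}(x-u,y)\,du$ is not exact, because the kernel does not have unit mass: for $0<y<\pi$ one has $\int_{\mathbb{R}}G^{\pi}(\xi,y)\,d\xi=\sinh\lbr(r(\pi-y)\rbr)/\sinh(\pi r)$, so you need the extra term $\varphi(x)\lbr(\sinh\lbr(r(\pi-y)\rbr)/\sinh(\pi r)-1\rbr)$, harmless since it vanishes as $y\to0^+$; with that correction your splitting-plus-H\"older estimate is essentially the paper's Proposition 1, which symmetrizes the integral to exploit $|\varphi(x+\xi)-\varphi(x-\xi)|\le C\xi^{\lambda}$ and bounds the inner sum by a Gamma-function integral. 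Your uniqueness argument, on the other hand, is a genuinely different and perfectly valid route: the paper expands a bounded solution of the homogeneous problem in the modes $e^{\pm\sqrt{k^2+r^2}\,x}\sin ky$ and observes that all nonzero modes are unbounded (which tacitly requires justifying that every bounded solution admits such an expansion), whereas your Phragm\'en--Lindel\"of comparison is self-contained: take the supersolution $h=\cosh(\beta x)\cos\lbr(\beta(y-\pi/2)\rbr)$ with $0<\beta<1$, so that $Lh=-r^2h\le0$ and $h>0$ on the closed strip, compare $\pm W$ with $\varepsilon h$ on $[-R,R]\times[0,\pi]$, let $R\to\infty$ and then $\varepsilon\to0$.
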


\begin{cor}
The Green function $G^{\pi}_{\Delta}$ for the Laplace equation $\Delta V = 0$ (i.e. for $r=0$) has the  representation (see \cite{Danilov}):
\beq
G^{\pi}_{\Delta}(x,y) = \delta(x)\Theta(-y)+\frac{1}{2\pi} \frac{\sin y}{ \cosh x -\cos y}.
\enq
\end{cor}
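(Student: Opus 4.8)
The plan is to obtain this corollary as a direct specialization of Theorem \ref{THM: Strip} at $r=0$, so that the whole task reduces to evaluating a single trigonometric series in closed form. Setting $r=0$ in \eqref{Eq: Green function Strip}, I first observe that $\sqrt{k^2+r^2}=k$ for every $k\ge 1$, so the amplitude $k/\sqrt{k^2+r^2}$ collapses to $1$ and the exponent $-|x|\sqrt{k^2+r^2}$ becomes $-k|x|$. The singular term $\delta(x)\Theta(-y)$ does not involve $r$ and carries over unchanged. Hence it remains only to establish the identity
\[
\frac{1}{\pi}\sum_{k=1}^{\infty}e^{-k|x|}\sin ky=\frac{1}{2\pi}\frac{\sin y}{\cosh x-\cos y}.
\]

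For the key step I would write $\sin ky=\operatorname{Im}e^{iky}$ and recognize the sum as the imaginary part of a geometric series with ratio $z=e^{-|x|+iy}$. For $x\ne 0$ one has $|z|=e^{-|x|}<1$, so the series converges absolutely and $\sum_{k\ge 1}z^k=z/(1-z)$. Multiplying numerator and denominator by $\overline{1-z}$ and extracting the imaginary part leaves $e^{-|x|}\sin y$ in the numerator over the real denominator $1-2e^{-|x|}\cos y+e^{-2|x|}$. Finally, multiplying top and bottom by $e^{|x|}$ and using $e^{|x|}+e^{-|x|}=2\cosh x$ produces $\sin y/\bigl(2(\cosh x-\cos y)\bigr)$, which after the prefactor $1/\pi$ is exactly the claimed closed form.

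I expect no deep obstacle here: the argument is elementary once Theorem \ref{THM: Strip} is available. The only points deserving a little care are the convergence bookkeeping---the geometric summation is valid precisely for $x\ne 0$, which is exactly the region where $G^{\pi}_{\Delta}$ is a genuine function rather than a distribution---and the final algebraic simplification, where one must remember that $\cosh$ is even, so $\cosh|x|=\cosh x$ and the formula is correctly symmetric in the sign of $x$. As a consistency check I would note that the resulting expression matches the closed form reported in \cite{Danilov}.
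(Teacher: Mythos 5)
Your proposal is correct and follows essentially the same route as the paper: specialize the Green function (\ref{Eq: Green function Strip}) at $r=0$ and evaluate the series $\frac{1}{\pi}\sum_{k\ge 1}e^{-k|x|}\sin ky$ in closed form, an identity the paper simply takes from the tables (Gradshteyn--Ryzhik 1.445.1, invoked in the proof of Theorem \ref{THM: Strip 2}) whereas you derive it by summing the geometric series with ratio $z=e^{-|x|+iy}$. Your bookkeeping (convergence only for $x\ne 0$, evenness of $\cosh$) is accurate, so the argument stands as a self-contained justification of the corollary.
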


\begin{thm}
\label{THM: Strip 2}
The Green function $G^{\pi}$ given by (\ref{Eq: Green function Strip}) has the integral representation
\beq
\label{Eq: Green function Strip Alterantive}
G^{\pi}(x,y) = \delta(x)\Theta(-y)+  \frac{ |x|}{2\pi} \sin y \int_{1}^{\infty}
\frac{J_0 (|x| r \sqrt{t^2 -1 }) \sinh |x|t} {\lbr( \cosh xt - \cos{y}\rbr)^2} dt
\enq
where $J_0(z)$ is the Bessel function of zero order.
\end{thm}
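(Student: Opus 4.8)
\bep The plan is to reduce the claimed identity to the series representation \rf{Eq: Green function Strip} by inserting a Laplace-type integral representation of each summand and then performing the $k$-summation in closed form. Since both expressions in \rf{Eq: Green function Strip} and \rf{Eq: Green function Strip Alterantive} share the singular term $\delta(x)\Theta(-y)$, it suffices to match the regular parts. I treat the case $x \neq 0$ (the slice $x=0$ being negligible for the convolution \rf{Eq: Convolution for strip}) and abbreviate $a = |x| > 0$.

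First I would establish the key representation
\beq
\frac{e^{-a\sqrt{k^2+r^2}}}{\sqrt{k^2+r^2}} = a\int_1^\infty J_0\lbr(ar\sqrt{t^2-1}\rbr) e^{-kat}\, dt,
\enq
valid for $a>0$ and every integer $k \ge 1$. This follows from the classical Laplace transform $\int_\alpha^\infty e^{-ps}J_0(b\sqrt{s^2-\alpha^2})\,ds = e^{-\alpha\sqrt{p^2+b^2}}/\sqrt{p^2+b^2}$ (with $p=k$, $b=r$, $\alpha=a$) after the substitution $s=at$; as a consistency check, letting $a\to 0$ recovers $\int_0^\infty J_0(rt)e^{-kt}\,dt = (k^2+r^2)^{-1/2}$.

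Next I would substitute this representation into \rf{Eq: Green function Strip}, multiply by $k\sin ky$, and interchange summation and integration. The interchange is legitimate by Fubini's theorem: since $|J_0| \le 1$ and $k/\sqrt{k^2+r^2}\le 1$, the double expression is dominated by $\sum_{k\ge 1}\int_1^\infty k\, e^{-kat}\,dt = a^{-1}\sum_{k\ge 1}k\,e^{-ka} < \infty$ for $a>0$. After the interchange the inner object is the elementary series $\sum_{k=1}^\infty k\,e^{-kat}\sin ky$, which I would evaluate as the imaginary part of $\sum_{k\ge1} k z^k = z/(1-z)^2$ with $z = e^{-at}e^{iy}$, obtaining
\beq
\sum_{k=1}^\infty k\,e^{-kat}\sin ky = \frac{q(1-q^2)\sin y}{(1-2q\cos y + q^2)^2}, \qquad q = e^{-at}.
\enq

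Finally I would simplify this with the hyperbolic identities $1-2q\cos y + q^2 = 2e^{-at}(\cosh at - \cos y)$ and $q(1-q^2) = 2e^{-2at}\sinh at$, which collapse the closed-form sum to $\sin y\,\sinh at\,/\,[2(\cosh at-\cos y)^2]$. Reinserting this into the integral and restoring the prefactor $a/\pi$ yields exactly \rf{Eq: Green function Strip Alterantive}, upon noting $\cosh at = \cosh xt$ since $a=|x|$. I expect the only genuine subtleties to be the justification of the term-by-term integration and the careful statement of the key Laplace identity; the summation of the geometric-type series and the hyperbolic simplification are routine once these are in place. $\enpf$
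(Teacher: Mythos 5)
Your proof is correct and follows essentially the same route as the paper: both rest on the identity $\int_1^\infty e^{-k\xi t}J_0\left(\xi r\sqrt{t^2-1}\right)dt = e^{-\xi\sqrt{k^2+r^2}}/\left(\xi\sqrt{k^2+r^2}\right)$ (Gradshteyn--Ryzhik 6.646(1)), followed by interchanging summation and integration and evaluating $\sum_{k\ge 1} k q^k \sin ky$ in closed form. The only immaterial difference is that the paper obtains this last sum by differentiating the known $r=0$ series $\sum_{k\ge 1} q^k \sin ky = \sin y/\left(2(\cosh\xi-\cos y)\right)$ (Gradshteyn--Ryzhik 1.445.1), whereas you sum $\sum_{k\ge 1} k z^k = z/(1-z)^2$ directly and take imaginary parts; you also make explicit the Fubini justification that the paper passes over in silence.
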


\begin{cor}
The Green function (\ref{Eq: Green function Strip})  can be represented in terms of the Green function $G^{\pi}_{\Delta}$ as follows:
\beq
G^{\pi} (x,y) =
G^{\pi}_{\Delta}(x,y) - r \int_{|x|}^{\infty} G^{\pi}_{\Delta} (t,y) \frac{J_{1} (r\sqrt{t^2 - x^2})t}{\sqrt{t^2 - x^2}}dt, \enq
where $J_1(z)$ is the Bessel function of first order.
\end{cor}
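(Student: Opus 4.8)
The plan is to reduce the claimed identity to a term-by-term statement relating the series in Theorem \ref{THM: Strip} to its Laplace counterpart, and then to verify that statement via a classical Laplace-transform formula for $J_0$ followed by an integration by parts. First I would dispose of the singular parts: both $G^{\pi}$ and $G^{\pi}_{\Delta}$ carry the same term $\delta(x)\Theta(-y)$, and the contribution of the delta part $\delta(t)\Theta(-y)$ to the integral $\int_{|x|}^{\infty} G^{\pi}_{\Delta}(t,y)\,(\cdots)\,dt$ vanishes for $x\neq 0$ because $t=0$ lies outside the range of integration. Hence the singular contributions match and it suffices to treat the regular (series) parts.

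Writing $a=|x|$ and inserting the series $G^{\pi}_{\Delta}(t,y)=\frac{1}{\pi}\sum_{k\ge 1}\sin(ky)\,e^{-kt}$, then interchanging summation and integration, the corollary reduces to proving, for every $k\ge 1$,
$$\frac{k}{\sqrt{k^2+r^2}}\,e^{-a\sqrt{k^2+r^2}}= e^{-ka}- r\int_a^\infty e^{-kt}\,\frac{J_1\!\big(r\sqrt{t^2-a^2}\big)\,t}{\sqrt{t^2-a^2}}\,dt,$$
after which comparison with the series in \rf{Eq: Green function Strip}, multiplication by $\sin(ky)/\pi$, and resummation finish the argument. To establish this scalar identity I would start from the classical formula $\int_a^\infty e^{-kt}J_0\big(r\sqrt{t^2-a^2}\big)\,dt = e^{-a\sqrt{k^2+r^2}}/\sqrt{k^2+r^2}$ (valid for $k>0$), so that $\frac{k}{\sqrt{k^2+r^2}}e^{-a\sqrt{k^2+r^2}}=k\int_a^\infty e^{-kt}J_0\big(r\sqrt{t^2-a^2}\big)\,dt$. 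Integrating by parts with $dv=k\,e^{-kt}\,dt$ produces the boundary term $e^{-ka}$ (using $J_0(0)=1$ and the decay of $e^{-kt}$ at infinity) together with $\int_a^\infty e^{-kt}\,\frac{d}{dt}J_0\big(r\sqrt{t^2-a^2}\big)\,dt$; applying $J_0'=-J_1$ and the chain rule $\frac{d}{dt}\sqrt{t^2-a^2}=t/\sqrt{t^2-a^2}$ reproduces exactly the right-hand side above.

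The main technical point will be justifying the term-by-term interchange of summation and integration. For $a=|x|>0$ the series converges geometrically, and the Bessel factor is well controlled: since $J_1(z)\sim z/2$ as $z\to 0$, the kernel $J_1(r\sqrt{t^2-a^2})\,t/\sqrt{t^2-a^2}$ remains bounded near $t=a$, so the apparent singularity at the lower limit is removable, and $J_1$ is bounded on $[0,\infty)$. Thus dominated convergence applies on each compact subinterval while the tail is dominated by $e^{-kt}$, legitimizing the exchange. I would also remark that letting $r\to 0$ recovers $G^{\pi}=G^{\pi}_{\Delta}$, which serves as a consistency check on the formula.
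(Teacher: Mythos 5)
Your proof is correct, and it rests on precisely the two ingredients the paper itself uses in this circle of results: the Laplace-type integral $\int_a^\infty e^{-kt}J_0\bigl(r\sqrt{t^2-a^2}\bigr)\,dt = e^{-a\sqrt{k^2+r^2}}/\sqrt{k^2+r^2}$ (Gradshteyn--Ryzhik 6.646(1), which the paper quotes in the rescaled form with lower limit $1$) and the integration by parts that trades $J_0$ for $J_1$ via $J_0'=-J_1$, $J_0(0)=1$. The difference is one of organization rather than of substance. The paper first interchanges the sum and the integral and resums $\sum_k k\sin(ky)e^{-k\xi t}$ into the closed-form kernel $T(\xi t,y,0)$ of \rf{Eq: R series r =0}, which is exactly Theorem \ref{THM: Strip 2} in the Stieltjes form $R(\xi,y,r)=-\int_1^\infty J_0\,d\bigl(R(\xi t,y,0)\bigr)$; the corollary is then meant to follow by integrating that Stieltjes integral by parts (a step the paper leaves entirely implicit --- no proof of the corollary is given). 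You instead integrate by parts mode-by-mode, proving a scalar identity for each $k$, and resum only at the end, so your argument is a self-contained proof of the corollary that bypasses Theorem \ref{THM: Strip 2} altogether. What this buys is independence from the resummed representation and a proof where every analytic step is elementary; the price is that you must justify the sum--integral interchange yourself, which you do adequately (boundedness of $J_1(z)/z$ kills the apparent singularity at $t=a$, and $\sum_k e^{-kt}$ is geometric on $[a,\infty)$ for $a=|x|>0$). Your handling of the singular terms $\delta(x)\Theta(-y)$ and the $r\to 0$ consistency check are both sound; like the paper, the identity is understood off the singular support $x=0$, in keeping with the distributional character of the Green function.
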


\begin{remark}
The problem $LV=0$ with boundary conditions $V(x,0) = 0$, $V(x,\pi) = \tilde{\varphi}(x)$ can be reduced to the  problem (\ref{Eq: Main Strip} by the substitution $\tilde{y} = \pi -y$. Using the linearity of $L$ we obtain the solution of the problem $LV=0$ with the boundary conditions $V(x,0) = \varphi(x)$ and $V(x,\pi) = \tilde\varphi(x)$ as the sum of solutions of the problems in which one of the boundary conditions is a function equal to zero.
\end{remark}

\subsection{Green function: construction}
In this subsection we derive the formulas (\ref{Eq: Green function Strip}) for the Green function. To this aim we introduce the interaction potential $P^{\pi} (x,y;u)$ defined as the limit
    \[
    P^{\pi} (x,y;u) = \lim_{ \varepsilon \rightarrow 0} P_{\varepsilon}^{\pi} (x,y;u), \quad
    \]
The function $P_{\varepsilon}^{\pi} (x,y;u)$ is the solution in the distribution sense (see, e.g. \cite{Vladimirov}) of the  boundary value  problem
\begin{equation}
\label{Eq: Potential P_0_eps and P_1_eps}
    \left\{ {\begin{array}{l}
        LP_{\varepsilon}^{\pi} = 0,\\
        P_{\varepsilon}^{\pi}|_{y=0} = e^{-\varepsilon (x-u)} \Theta(x-u), \\
        P_{\varepsilon}^{\pi}|_{y=\pi} = 0.  \\
    \end{array}} \right.
\end{equation}
We define the Green function as the partial derivative in $x$ of  $P^{\pi}$:
\beq
G(x-u,y) = \diffp{P^{\pi}(x,y;u)}{x}.
\enq
Let us consider
 the Fourier transform in  $x$ of the potential $P_{\varepsilon}^{\pi}$:
\[
v_{\varepsilon}(y;\xi,u) = \int_{\mathbb{R}} e^{i\xi x} P_{\varepsilon}^{\pi}(x,y;u) dx,
\]
It  solves, as a function of $y$, the two-point   problem
\begin{equation}
\label{Eq: v_0}
    \left\{ {\begin{array}{l}
        \diff[2]{v_{\varepsilon}}{y} -(\xi^2+r^2)v_{\varepsilon} =0,\\
        v_{\varepsilon}(0;\xi,u) = \frac{e^{i\xi u}}{\varepsilon - i\xi},  \\
        v_{\varepsilon}(\pi;\xi,u) =0.  \\
    \end{array}} \right.
\end{equation}
The solution has the form
\[
v_{\varepsilon} (y;\xi,u) = \frac{e^{i\xi u}
\sinh\lbr( (\pi - y) \sqrt{r^2+\xi^2}\rbr)}{(-i\xi+\varepsilon)
\sinh\lbr( \pi \sqrt{r^2+\xi^2}\rbr)}.
\]
Making the inverse transform, we obtain the explicit formula for the potential $P^{\pi}_{\varepsilon}$:
\beq
\label{Eq: inv FFT P0}
P_{\varepsilon}^{\pi} (x,y;u) =\frac{1}{2\pi}\int_{\mathbb{R}} \frac{e^{i\xi (u - x)}
\sinh\lbr( (\pi - y) \sqrt{r^2+\xi^2}\rbr)}{(-i\xi+\varepsilon) \sinh\lbr( \pi \sqrt{r^2+\xi^2}\rbr)} d\xi.
\enq
The integrand  is an analytical function in the whole complex plane except zeros of functions
\[
\xi+i\varepsilon=0, \quad\quad  \sinh\lbr(\pi\sqrt{\xi^2+r^2}\rbr)=0,
\]
that is except the points
\[
\xi = -i\varepsilon, \quad \xi^{\pm}_k =\pm\sqrt{k^2+r^2}.
\]

\begin{figure}
\begin{center}
    \resizebox*{12cm}{!}{\includegraphics{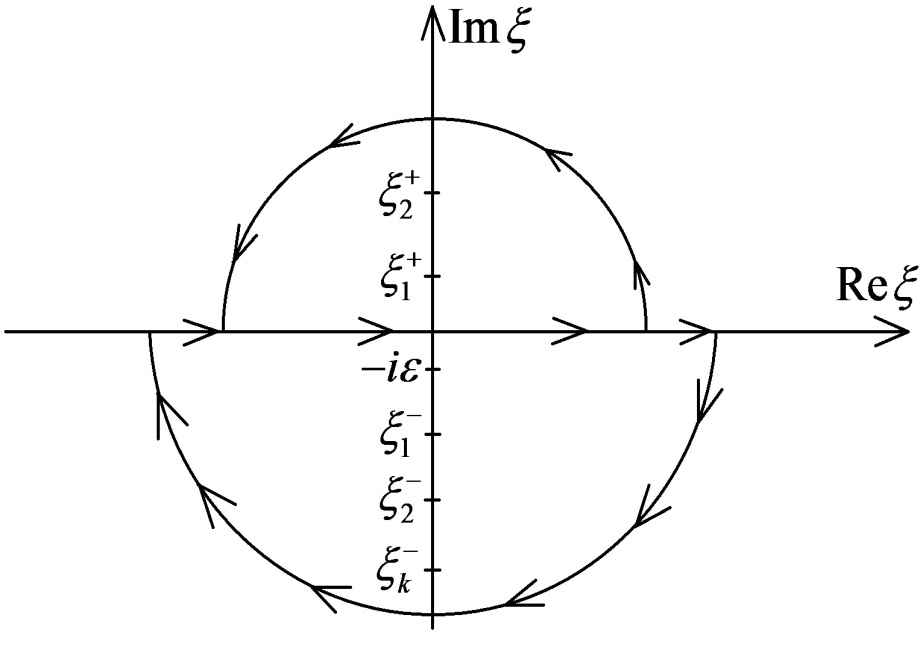}}
    \caption{\label{fig1} Integration loop for the strip case
    \label{fig: strip}}
\end{center}
\end{figure}
Integrating along the loop depicted in Figure \ref{fig: strip} and applying the Jordan lemma we obtain the infinite sum representation of (\ref{Eq: inv FFT P0})
\begin{eqnarray*}
P_{\varepsilon}^{\pi} (x,y;u) &=& i \Theta(x-u) \sum_{k=1}^{\infty} {\rm Res}\, \Phi_{\varepsilon}(\xi_k^+; x,y)\\
&&+i\Theta(u - x)
\lbr(-{\rm Res}\, \Phi_{\varepsilon}(-i\varepsilon;x,y) - \sum_{k=1}^{\infty} {\rm Res}\, \Phi_{\varepsilon}(\xi_k^-;x,y)\rbr).
\end{eqnarray*}
Here $\Phi_{\varepsilon}(\xi;x,y)$ is integrand function in (\ref{Eq: inv FFT P0}). Computing the residuals, we get
\[
{\rm Res}\, \Phi_{\varepsilon}(\xi_k^{\pm}; x,y) =\mp\frac{ i k \sin(ky) e^{\mp(x-u)\sqrt{r^2+k^2}}}{\pi (r^2 + k^2)}, \qquad
{\rm Res}\, \Phi_{\varepsilon}(-i\varepsilon; x,y) = \frac{\sinh\lbr((\pi-y) r \rbr)}{\sinh(\pi r)}.
\]
Letting $\varepsilon \rightarrow 0$ and calculating the partial derivative $\partial P^{\pi} / \partial x$, we can obtain the result.

\subsection{The general elliptic operator}
Now we extend our formula for the strip $\Pi^l$ of width $l$ and more general operator
\[
\label{Eq: Main Strip General}
    \left\{ {\begin{array}{l}
        \sigma_1^2 \diffp[2]{V}{x} + 2\rho \sigma_1 \sigma_2 \frac{\partial^2 V}{\partial x \partial y} +\sigma_2^2 \diffp[2]{V}{y}
        +\alpha_1 \diffp{V}{x} + \alpha_2 \diffp{V}{y}- r^2 V = 0, \quad (x,y) \in {\rm int\,} \Pi^l, \\
        V|_{y=0} = \varphi(x), \quad x \in \mathbb{R},  \\
        V|_{y=\pi} = 0, \quad x \in \mathbb{R},  \\
    \end{array}} \right.
\]
where the coefficient $\rho$ satisfies the condition $|\rho| < 1$. The solution of the problem (\ref{Eq: Main Strip General}) also allows the representation $V= \varphi * \hat{G^l}$ where the Green function $\hat{G^l}$ is given as follows:
\[
\hat{G^{l}}(x,y) = \delta(s(x
,y))\Theta(-y) + \frac{\pi \sigma_2^2}{(1-\rho^2) \sigma_1^2 l^2}
e^{-\frac{\alpha_2 y}{2\sigma^2_2} +\beta s(x,y) } \sum_{k=0}^{\infty} k \sin \lbr(\frac{ \pi y}{l}\rbr)\frac{e^{-R(k)|s(x,y)|} }{R(k)},
\]
where
\[
s(x,y) = \frac{\rho \sigma_1}{\sigma_2}y -x, \qquad
\beta = \frac{1}{2(1-\rho^2)\sigma_1} \lbr( -\frac{\rho \alpha_2}{\sigma_2}+\frac{\alpha_1}{\sigma_1}\rbr),
\]
\[
R(k) = \frac{1}{2(1-\rho^2)\sigma_1}\sqrt{\frac{\alpha_1^2}{\sigma_1^2} -2 \frac{\rho \alpha_1 \alpha_2}{\sigma_1\sigma_2} +\frac{\alpha_2^2}{\sigma_2^2} + 4(1-\rho^2)r^2 + \frac{4(1-\rho^2)\sigma_2^2\pi^2 k^2}{l^2}}.
\]
The proof of uniqueness and existence of the solution and the construction of the Green function is a straightforward extension of arguments  given in  Section \ref{Sec: proofs} for the Laplace operator.

\section{Problem on the half-plane}
\label{Sec: Plane}
\subsection{Existence and uniqueness theorem}
In this section we study  the case of the half-plane $\Pi^\infty = {\mathbb{R} \times \mathbb{R}_+}$. Let us consider the elliptic  boundary value problem for the operator $LV = \Delta V - r^2 V$:
\begin{equation}
\label{Eq: Main HalfPlane}
    \left\{ {\begin{array}{l}
        LV(x,y) = 0, \quad (x,y) \in {\rm int\,} \Pi^\infty, \\
        V|_{y=0} = \varphi(x), \quad x \in \mathbb{R},  \\
        V|_{y=+\infty} = 0, \quad x \in \mathbb{R}.  \\
    \end{array}} \right.
\end{equation}

\begin{thm}
\label{THM: Plane}
Let $H^\lambda$ be the space of H\"older  functions of order $\lambda >0$ and let $\varphi$  be a bounded function from $H^\lambda$.Then the solution of the problem (\ref{Eq: Main HalfPlane}) exists in the classical sense,  is unique, and allows the  representation
\beq
\label{Eq: Convolution for HalfPlane}
V(x,y) = \int_{\mathbb{R}} \varphi(x) G^{\infty}(x-u,y)du.
\enq
where the Green function $G^{\infty}$ has the  form
\beq
\label{Eq: Green function HalfPlane}
G^{\infty}(x,y) = \delta(x)\Theta(-y)+\frac{1}{2\pi} \int_{0}^{\infty}\frac{\xi \sin(\xi y) e^{-|x|\sqrt{\xi^2+r^2}}}{\sqrt{\xi^2+r^2}}d\xi.
\enq
\end{thm}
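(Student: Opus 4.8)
The plan is to mirror the strip construction from Section 2.2, adapting it to the half-plane by replacing the discrete spectrum coming from $\sinh(\pi\sqrt{\xi^2+r^2})$ with a continuous spectral integral. First I would introduce the regularized interaction potential $P_\varepsilon^\infty(x,y;u)$ as the distributional solution of $LP_\varepsilon^\infty=0$ on the half-plane with boundary data $P_\varepsilon^\infty|_{y=0}=e^{-\varepsilon(x-u)}\Theta(x-u)$ and the decay condition $P_\varepsilon^\infty|_{y=+\infty}=0$. Taking the Fourier transform in $x$, as in \rf{Eq: v_0}, the transform $v_\varepsilon(y;\xi,u)$ solves the same second-order ODE $v_\varepsilon''-(\xi^2+r^2)v_\varepsilon=0$, but now on the half-line $y\ge 0$. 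The boundary condition at $y=0$ is $v_\varepsilon(0)=e^{i\xi u}/(\varepsilon-i\xi)$, and the vanishing condition at infinity selects the decaying exponential, giving
\beq
v_\varepsilon(y;\xi,u)=\frac{e^{i\xi u}}{\varepsilon-i\xi}\,e^{-y\sqrt{\xi^2+r^2}},
\enq
where $\sqrt{\xi^2+r^2}$ denotes the branch with positive real part.

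Next I would invert the Fourier transform to obtain the integral representation
\[
P_\varepsilon^\infty(x,y;u)=\frac{1}{2\pi}\int_{\mathbb{R}}\frac{e^{i\xi(u-x)}}{-i\xi+\varepsilon}\,e^{-y\sqrt{\xi^2+r^2}}\,d\xi,
\]
and then, following the same definition used in the strip case, set $G^\infty(x-u,y)=\partial P^\infty/\partial x$ with $P^\infty=\lim_{\varepsilon\to 0}P_\varepsilon^\infty$. Differentiating in $x$ brings down a factor of $-i\xi$ in the numerator, which cancels the $(-i\xi+\varepsilon)$ in the denominator as $\varepsilon\to 0$, so that formally $G^\infty(x,y)=\tfrac{1}{2\pi}\int_{\mathbb{R}}e^{-i\xi x}e^{-y\sqrt{\xi^2+r^2}}\,d\xi$ plus the singular $\delta(x)\Theta(-y)$ term arising from the boundary behavior. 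The claimed formula \rf{Eq: Green function HalfPlane} then follows by exploiting the evenness of the integrand in $\xi$ and folding the integral onto $[0,\infty)$; the factor $\xi\sin(\xi y)/\sqrt{\xi^2+r^2}$ and the $e^{-|x|\sqrt{\xi^2+r^2}}$ emerge after integrating by parts in $y$ (or differentiating the $y$-decay kernel) and symmetrizing, with $|x|$ appearing because the analytic continuation and the $\varepsilon\to0$ limit must be taken consistently for $x>u$ and $x<u$.

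For existence and uniqueness in the classical sense, the same two ingredients needed in Theorem \ref{THM: Strip} carry over. Uniqueness would follow from a maximum-principle argument: any difference of two bounded solutions satisfies $LW=0$ with zero boundary data, and since $L=\Delta-r^2$ with $r^2\ge 0$ admits no nontrivial bounded harmonic-type solution vanishing on $y=0$ and at $y=+\infty$, we get $W\equiv 0$. For the classical regularity I would verify that the convolution \rf{Eq: Convolution for HalfPlane} against the bounded H\"older datum $\varphi\in H^\lambda$ produces a $C^2$ function solving $LV=0$ in the interior and attaining the boundary value continuously; this is where the H\"older hypothesis is used, to control the behavior of the singular integral as $y\to 0^+$ and confirm $V(x,0)=\varphi(x)$.

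The main obstacle I expect is justifying the $\varepsilon\to0$ limit and the interchange of differentiation with the (now non-absolutely-convergent) spectral integral, since unlike the strip case there are no residues to sum—the contour argument is replaced by a delicate limit of oscillatory integrals. Controlling the boundary-layer singularity at $(x,y)=(0,0^+)$, and showing that the $\delta(x)\Theta(-y)$ term correctly encodes the jump so that the representation reproduces $\varphi$ on the boundary, will require the most care; I anticipate borrowing the regularity estimates from the strip proof in Section \ref{Sec: proofs}, since the half-plane kernel is essentially the $l\to\infty$ limit of the strip kernel.
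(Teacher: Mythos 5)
Your construction follows the paper's own route (regularized potential $P_\varepsilon^\infty$, Fourier transform in $x$, the ODE with the decaying branch, inversion, then $\partial_x$ and $\varepsilon\to 0$), but it has a genuine hole exactly at the step you treat as bookkeeping: the passage from the inversion integral to the stated kernel \rf{Eq: Green function HalfPlane}. After the formal cancellation of $(-i\xi+\varepsilon)$ your expression is
\[
\frac{1}{2\pi}\int_{\mathbb{R}} e^{-i\xi x}\, e^{-y\sqrt{\xi^2+r^2}}\,d\xi
=\frac{1}{\pi}\int_{0}^{\infty} \cos(\xi x)\, e^{-y\sqrt{\xi^2+r^2}}\,d\xi,
\]
an integral that oscillates in $x$ and decays in $y$, whereas \rf{Eq: Green function HalfPlane} oscillates in $y$ and decays in $|x|$. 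No ``evenness and folding,'' integration by parts in $y$, or ``symmetrizing'' converts one into the other: the two representations are related by rotating the integration contour onto the imaginary $\xi$-axis and integrating the jump of $\sqrt{\xi^2+r^2}$ across the branch cuts emanating from $\xi=\pm ir$. That branch-cut deformation is precisely the paper's argument (the loop of Figure \ref{fig: plane} plus Jordan's lemma, with the single pole at $\xi=-i\varepsilon$ producing the $\delta(x)\Theta(-y)$ term and the cut integral producing the $\xi\sin(\xi y)/\sqrt{\xi^2+r^2}$ factor). Your closing remark that ``unlike the strip case there are no residues to sum---the contour argument is replaced by a delicate limit of oscillatory integrals'' names the missing idea exactly backwards: the contour argument is not replaced, it is the crux; only the residue sum of the strip is replaced by branch-cut integrals. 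Incidentally, carrying the cut integral out honestly gives the non-singular part with coefficient $1/\pi$, which agrees with the $K_1$ corollary following the theorem and with the $l\to\infty$ limit of the strip kernel, and thus exposes a factor-of-two misprint in \rf{Eq: Green function HalfPlane} itself; a derivation that never evaluates the cut cannot even detect this.

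Two further points. For uniqueness you invoke ``a maximum-principle argument,'' but on an unbounded domain the assertion that $\Delta-r^2$ admits no nontrivial bounded solution vanishing on $y=0$ and at $y=+\infty$ \emph{is} the entire content of the claim; the classical maximum principle does not apply directly, and you would need a Phragm\'en--Lindel\"of barrier (e.g.\ comparison of the difference of solutions with $\varepsilon\cosh(\beta x)$, $0<\beta<r$, letting $\varepsilon\to 0$) to make it rigorous. The paper instead separates variables in polar coordinates and uses the asymptotics of $I_n$ and $K_n$ to rule out bounded nonzero terms; your route is viable and arguably cleaner, but as written it is an assertion, not a proof. Likewise, for regularity and attainment of the boundary values the paper does concrete work---the H\"older estimate showing absolute convergence of the convolution (its Proposition 1) and the lemma on the Dirichlet problem in a disk giving interior $C^2$ smoothness---and this is where the hypothesis $\varphi\in H^\lambda$ actually enters; deferring to ``borrowing the estimates from the strip proof'' is acceptable in spirit (the paper itself omits Propositions 2 and 3 for the half-plane) only after the kernel formula and its convergence properties have been established, which is the part your sketch leaves open.
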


\begin{cor}
The Green function $G^{\infty}$ can be represented in terms of the modified Bessel function $K_1(z)$ of the first order
\beq
\label{Eq: Green function HalfPalne Alterantive}
G^{\infty} (x,y) = \delta(x)\Theta(-y)+ \frac{r y}{\pi \sqrt{x^2 + y^2}} K_1 \lbr(r \sqrt{x^2 +y^2} \rbr).
\enq
\end{cor}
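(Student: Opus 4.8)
Starting from the integral representation \rf{Eq: Green function HalfPlane}, the singular term $\delta(x)\Theta(-y)$ is common to both sides of \rf{Eq: Green function HalfPalne Alterantive}, so it suffices to evaluate the integral. The plan is to recognize that integral as a $y$-derivative of one that collapses to a modified Bessel function. First I would introduce the auxiliary integral
\beq
\Psi(x,y) = \int_0^\infty \frac{\cos(\xi y)}{\sqrt{\xi^2+r^2}}\, e^{-|x|\sqrt{\xi^2+r^2}}\, d\xi,
\enq
and observe that $\partial_y\Psi = -\int_0^\infty \frac{\xi \sin(\xi y)}{\sqrt{\xi^2+r^2}}\, e^{-|x|\sqrt{\xi^2+r^2}}\, d\xi$, which is, up to the constant prefactor, exactly the integral appearing in \rf{Eq: Green function HalfPlane}.

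Next I would evaluate $\Psi$ in closed form. The substitution $\xi = r\sinh t$, for which $\sqrt{\xi^2+r^2} = r\cosh t$ and $d\xi = r\cosh t\, dt$, recasts $\Psi$ as
\beq
\Psi(x,y) = \int_0^\infty e^{-r|x|\cosh t}\cos\lbr( r y \sinh t\rbr)\, dt.
\enq
This is a standard integral representation of the modified Bessel function, yielding $\Psi(x,y) = K_0\lbr( r\sqrt{x^2+y^2}\rbr)$. The identification is pinned down by its two degenerate cases, $y=0$ and $x=0$, which reduce to the classical formulas $K_0(b) = \int_0^\infty e^{-b\cosh t}\,dt$ and $K_0(b) = \int_0^\infty \cos(bu)/\sqrt{1+u^2}\,du$ respectively.

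With $\Psi$ in hand, I would differentiate in $y$, using $K_0'(z) = -K_1(z)$ together with $\partial_y \sqrt{x^2+y^2} = y/\sqrt{x^2+y^2}$, to obtain
\beq
-\partial_y\Psi(x,y) = \frac{r y}{\sqrt{x^2+y^2}}\, K_1\lbr( r\sqrt{x^2+y^2}\rbr).
\enq
Reinstating the prefactor and the delta term then produces the claimed representation \rf{Eq: Green function HalfPalne Alterantive}. The overall constant is best confirmed against the Laplace limit $r\to 0$, where $K_1(z)\sim 1/z$ forces the bracketed term to reduce to the half-plane Poisson kernel $y/\lbr(\pi(x^2+y^2)\rbr)$, which also serves as an independent sanity check on the normalization of the kernel.

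The main obstacle is justifying the interchange of $\partial_y$ with $\int_0^\infty$. For $x\neq 0$ the factor $e^{-|x|\sqrt{\xi^2+r^2}}$ decays exponentially in $\xi$, so the differentiated integrand is dominated locally uniformly in $(x,y)$ by an integrable function and dominated convergence applies directly. The delicate point is $x=0$, where the integrand loses its exponential decay and the integral converges only conditionally; there one must either argue in the distributional sense or approach $x=0$ as a limit from $x\neq 0$ and invoke continuity, in the same spirit as the $\varepsilon\to 0$ limiting procedure used to construct the interaction potential. Establishing the $K_0$ identity for $\Psi$ cleanly, including convergence as $t\to\infty$, is the remaining step requiring care.
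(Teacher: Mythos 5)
Your proposal is correct, but it takes a genuinely different route from the paper. The paper's entire proof is a one-line citation of the tabulated integral (Gradshteyn--Ryzhik 3.914)
\beq
\int_{0}^{\infty} \frac{\xi \sin(a\xi)e^{-\beta\sqrt{\gamma^2+\xi^2}} }{\sqrt{\gamma^2+\xi^2}} d\xi = \frac{a\gamma}{\sqrt{a^2+\beta^2}} K_1\lbr( \gamma \sqrt{a^2+\beta^2}\rbr),
\enq
applied with $a=y$, $\beta=|x|$, $\gamma=r$; this is exactly the identity you derive. Your derivation replaces that table lookup with a more elementary one: the representation $\int_0^\infty e^{-a\cosh t}\cos(b\sinh t)\,dt = K_0(\sqrt{a^2+b^2})$ (Gradshteyn--Ryzhik 3.961.2, or equivalently the Fourier representation of the free-space Green function of $\Delta-r^2$), reached via $\xi=r\sinh t$, followed by differentiation in $y$ under the integral sign using $K_0'=-K_1$. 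What your approach buys is self-containedness and an honest treatment of the analytic issues (domination for $x\neq 0$, the conditional convergence at $x=0$) which the paper never addresses; what it costs is that you still need one standard identity, and your ``pinned down by two degenerate cases'' argument is a consistency check rather than a proof of it --- you should simply cite 3.961.2 there, which puts your proof on the same logical footing as the paper's, one level lower in the tables.

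One substantive point your final step glosses over: reinstating the prefactor of Theorem \ref{THM: Plane}, which is $\tfrac{1}{2\pi}$ in \rf{Eq: Green function HalfPlane}, your computation yields the coefficient $\tfrac{ry}{2\pi\sqrt{x^2+y^2}}$, i.e.\ \emph{half} of the coefficient $\tfrac{ry}{\pi\sqrt{x^2+y^2}}$ claimed in \rf{Eq: Green function HalfPalne Alterantive}. This factor-of-two mismatch is internal to the paper (its own one-line proof has the same defect), and your $r\to 0$ sanity check in fact resolves it: only the corollary's constant reproduces the half-plane Poisson kernel $y/(\pi(x^2+y^2))$, so the normalization $\tfrac{1}{2\pi}$ in the statement of Theorem \ref{THM: Plane} is the typo. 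You should state this explicitly rather than writing that the prefactor ``produces the claimed representation.''
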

\begin{proof}
Use the following identity for the Bessel functions (see \cite{Gradshteyn}, 3.914)
\beq
\int_{0}^{\infty} \frac{x \sin(ax)e^{-\beta\sqrt{\gamma^2+x^2}} }{\sqrt{\gamma^2+x^2}} dx = \frac{a\gamma}{\sqrt{a^2+\beta^2}} K_1\lbr( \gamma \sqrt{a^2+\beta^2}\rbr), \qquad {\rm Re}\,\beta, \, {\rm Re}\,\gamma,\,a>0.
\enq
\end{proof}

\subsection{Construction of the Green function}
For this domain we use the similar construction of the interaction potential $P^{\infty}(x,y;u)$ as the limit of  solutions $P_{\varepsilon}^{\infty}(x,y;u)$ to the boundary value problems
\begin{equation}
\label{Eq: Potential Plane}
    \left\{ {\begin{array}{l}
        \Delta P_{\varepsilon}^{\infty} - r^2 P_{\varepsilon}^{\infty} =0,\\
        P_{\varepsilon}^{\infty}|_{y=0} = e^{-\varepsilon(x-u)} \Theta(x-u),\\
        P_{\varepsilon}^{\infty}|_{y=\infty} = 0.
    \end{array}} \right.
\end{equation}
Let us consider the Fourier transform
\[
v_{\varepsilon}(y;\xi,u) = \int_{\mathbb{R} }e^{-i\xi x}P_{\varepsilon}^{\infty}(x,y;u)dx.
\]
Then $v_\varepsilon$ as a function of $y$ solves the boundary value problem for the ODE
\begin{equation}
\label{Eq: v_eps}
    \left\{ {\begin{array}{l}
        \diff[2]{v_{\varepsilon}}{y} -(\xi^2+r^2)v_{\varepsilon} =0,\\
        v_{\varepsilon}(0;\xi,u) =\frac{e^{i\xi u}}{\varepsilon - i\xi},  \\
        v_{\varepsilon}(\infty;\xi,u) =0.  \\
    \end{array}} \right.
\end{equation}
It can be expressed explicitly:
\[
v_\varepsilon(y;\xi,u) = \frac{e^{i\xi u - y\sqrt{\xi^2+r^2}}}{\varepsilon -i\xi}.
\]
As in the previous case we calculate the inverse Fourier transform and obtain the formula for the potential $P_{\varepsilon}^{\infty}$:
\[
P_{\varepsilon}^{\infty} (x,y;u) = \frac{1}{\pi} \int_{\mathbb{R}} \frac{e^{i\xi u - y\sqrt{\xi^2+r^2}}}{\varepsilon -i\xi} d\xi
\]
Integrating along the loop depicted in Figure \ref{fig: plane} and applying the Jordan lemma we get  the following representation for $P_{\varepsilon}^{\infty}$:
\begin{figure}
\begin{center}
    \resizebox*{12cm}{!}{\includegraphics{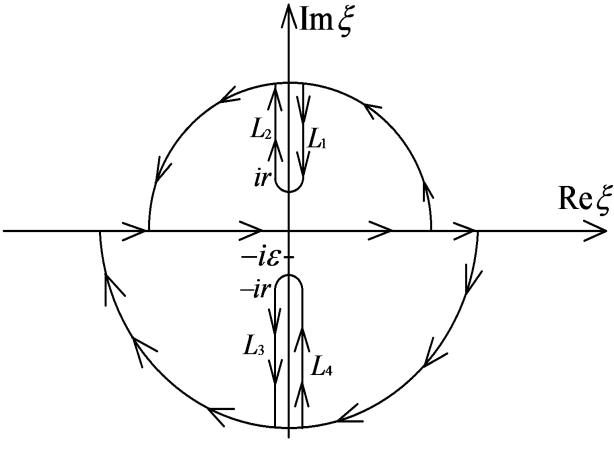}}
    \caption{\label{fig1} The integration loop for the half-plane case
    \label{fig: plane}}
\end{center}
\end{figure}

\begin{eqnarray*}
P_{\varepsilon}^{\infty} (x-u,y) &=& -\frac{\Theta(u-x)}{2 \pi} \int_{L_1+L_2} \Phi(\xi; x,y) d\xi - \frac{\Theta(x-u)}{2 \pi} \int_{L_3+L_4} \Phi(\xi; x,y) d\xi\\
&&
- i\Theta(x-u){\rm Res}\,(\Phi(-i\varepsilon)).
\end{eqnarray*}
In contrast to the strip case we have only one residue $\xi = -i\varepsilon$ and two branch points $\xi =\pm ir$. Letting $\varepsilon\to \infty$ and computing the partial derivative with respect to $x$,  we obtain the needed formula for the Green function.

\subsection{The general elliptic operator}
As in the previous section we generalize the results for a more general operator
\begin{equation}
\label{Eq: Half plane general}
    \left\{ {\begin{array}{l}
        \sigma_1^2 \diffp[2]{V}{x} + 2\rho \sigma_1 \sigma_2 \frac{\partial^2 V}{\partial x \partial y} +\sigma_2^2 \diffp[2]{V}{y}
        \alpha_1 \diffp{V}{x} + \alpha_2 \diffp{V}{y}- r^2 V = 0, \quad (x,y) \in {\rm int\,}\Pi^\infty, \\
        V|_{y=0} = \varphi(x), \quad x \in \mathbb{R},  \\
        V|_{y=\pi} = 0, \quad x \in \mathbb{R},  \\
    \end{array}} \right.
\end{equation}
where $|\rho| < 1$. The solution has the representation $V= \varphi * \hat{G}^{\infty}$ where $\hat{G}^{\infty}$ is the Green function
\[
\hat{G}^{\infty}(x-u,y) = \delta(s(x-u,y))\Theta(-y) +\frac{\sigma_2^2 e^{-\frac{\alpha_2 y}{2\sigma^2_2} +\beta s(x-u,y) } }{\pi (1-\rho^2) \sigma_1^2 }  \int_{0}^{\infty} \xi \sin \lbr(\xi y\rbr)\frac{e^{-R_{\infty}(\xi)|s_0(x-u,y)|} }{R_{\infty}(\xi)}d\xi,
\]
where
\[
R_{\infty}(\xi) = \frac{1}{2(1-\rho^2)\sigma_1}\sqrt{\frac{\alpha_1^2}{\sigma_1^2} -2 \frac{\rho \alpha_1 \alpha_2}{\sigma_1\sigma_2} +\frac{\alpha_2^2}{\sigma_2^2} + 4(1-\rho^2)r^2 + 4(1-\rho^2)\sigma_2^2  \xi^2}.
\]

\section{Proofs}
\label{Sec: proofs}
\subsection{Proof of  Theorem \ref{THM: Strip}}
We split the arguments into four parts. We prove step by step the following propositions:
\par
1.
The convolution product (\ref{Eq: Convolution for strip}) is a continuous and bounded function in domain $\Pi^{\pi}$ for all bounded boundary functions $\varphi$ and $\varphi_1$ from the space $H^\lambda$.
\par
2.
The convolution product (\ref{Eq: Convolution for strip}) is a twice differentiable function in the interior of the domain $\Pi^{\pi}$ for all bounded boundary function $\varphi$ from $H^\lambda$.
\par
3.
The convolution product (\ref{Eq: Convolution for strip}) is the solution of the problem (\ref{Eq: Main Strip})
\par
4.
The solution of the problem (\ref{Eq: Main Strip}) is unique.

\subsubsection{Proposition 1}
We check the absolute convergence of integrals in the convolution (\ref{Eq: Convolution for strip}), implying  that the convolution is continuous function. Recall the formula from  (\ref{Eq: Convolution for strip})
\[
\varphi * G^{\pi} = \frac{1}{\pi} \int_{\mathbb{R}} \varphi(u)
\sum_{k=0}^{\infty} \frac{k \sin(ky) e^{-|x-u|\sqrt{k^2+r^2}}}{\sqrt{k^2+r^2}} du +
\Theta(-y) \int_{\mathbb{R}} \delta(x-u) \varphi(u) du.
\]
Representing the first summand as the sum of integrals taken over domains $(-\infty, x]$ and $[x, -\infty)$ and  using the change of  variables $x-u =\xi$ in the first integral and $u-x =\xi$ in the second, we get that
\[
\varphi * G = \frac{1}{\pi} \int_{0}^{\infty} \left( \varphi(x+\xi)- \varphi(x-\xi) \right)
\sum_{k=0}^{\infty} \frac{k \sin(ky) e^{-\xi \sqrt{k^2+r^2}}}{\sqrt{k^2+r^2}} d\xi +\Theta(-y)\varphi(x). \]
The following chain of estimates, where $\Gamma(\nu)$ is the Euler Gamma function (see \cite{Abramowitz}) and  $C$ denotes constants varying from step to step, completes the proof:
\[
\int_{0}^{\infty}\lbr| \lbr(\varphi(x+\xi) - \varphi(x-\xi) \rbr)
\sum_{k=0}^{\infty} \frac{k \sin(ky) e^{-\xi \sqrt{k^2+r^2}}}{\sqrt{k^2+r^2}} \rbr| d\xi  \leq
\left\{ {\begin{array}{l}
        |\sin(ky)| \leq 1; \\
        |\varphi(x+\xi) - \varphi(x-\xi)| \leq C\xi^\lambda.  \\
    \end{array} }
\rbr\} \leq
\]

\[
\leq \int_{0}^{\infty}\lbr| C \xi^\lambda
\sum_{k=0}^{\infty} \frac{k e^{-\xi \sqrt{k^2+r^2}}}{\sqrt{k^2+r^2}} \rbr| d\xi \leq
\left\{ {\begin{array}{l}
       \int_0^{\infty} x^{\nu-1} e^{-\mu x} dx  = \frac{\Gamma(\nu)}{\mu^\nu} , \\
        \quad {\rm Re}\mu,\, {\rm Re}\nu >0.  \\
    \end{array}} \right\}
\leq
\]

\[
\leq C \Gamma(1+\lambda) \sum_{k=0}^{\infty} \frac{k}{(r^2+k^2)^{1+\lambda/2}}
\leq C \sum_{k=0}^{\infty} \frac{1}{k^{1+\lambda/2}} <\infty.
\]

\subsubsection{Proposition 2}
We consider the Dirichlet problem for the equation $LV =0$ in the disk $\Omega$ centered at the origin with radius $\tilde{\rho}$.
\begin{equation}
\label{Eq: Dirichlet in the round}
    \left\{ {\begin{array}{l}
        LV(x,y) = 0, \quad (x,y) \in \Omega, \\
        V|_{\partial\Omega} = \varphi(x,y),  \\
    \end{array}} \right.
\end{equation}
First we prove the following lemma:
\begin{lemma}
\label{Lemma 1}
The problem (\ref{Eq: Dirichlet in the round}) has a classical solution for any continuous boundary function $\varphi(x,y)$.
\end{lemma}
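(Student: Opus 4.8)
The plan is to deduce the case of a general continuous boundary function from that of a smooth one by a uniform approximation argument, the decisive tool being the maximum principle for $L=\Delta-r^2$. Since the zeroth order coefficient $-r^2$ is nonpositive, $L$ obeys the weak maximum principle: any $V\in C^2(\Omega)\cap C(\bar\Omega)$ with $LV=0$ cannot attain a positive interior maximum (at such a point $\Delta V\le 0$ while $r^2V\ge 0$, contradicting $\Delta V=r^2V$ after the usual $\varepsilon$-perturbation), and the same applies to $-V$, so that $\max_{\bar\Omega}|V|\le\max_{\partial\Omega}|V|$. I will use this estimate both to transfer boundary data to a limit and to guarantee uniqueness.

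First I would treat smooth boundary data. Writing the equation in polar coordinates $(\rho,\theta)$ centred at the origin and separating variables, the solutions of $LV=0$ that are regular at $\rho=0$ are $I_{|n|}(r\rho)\,e^{in\theta}$, where $I_\nu$ is the modified Bessel function. Hence for $\varphi\in C^\infty(\partial\Omega)$ with Fourier coefficients $\hat\varphi_n$ I would set
\[
V(\rho,\theta)=\sum_{n\in\mathbb{Z}}\frac{I_{|n|}(r\rho)}{I_{|n|}(r\tilde\rho)}\,\hat\varphi_n\,e^{in\theta}.
\]
Because $z\mapsto I_\nu(z)/z^\nu$ is increasing (it is a power series in $z^2$ with positive coefficients), one has $I_{|n|}(r\rho)/I_{|n|}(r\tilde\rho)\le(\rho/\tilde\rho)^{|n|}\le 1$ for $\rho\le\tilde\rho$; combined with the rapid decay of $\hat\varphi_n$ this yields absolute and locally uniform convergence of the series and of all its term-by-term derivatives. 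Thus $V$ is a smooth interior solution of $LV=0$, and uniform convergence up to $\rho=\tilde\rho$ gives $V\in C(\bar\Omega)$ with $V|_{\partial\Omega}=\varphi$.

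Then, for a general $\varphi\in C(\partial\Omega)$ I would pick trigonometric polynomials $\varphi_k\to\varphi$ uniformly on $\partial\Omega$ (Weierstrass approximation on the circle) and let $V_k$ be the corresponding smooth solutions from the previous step. Applying the maximum principle to the differences $V_k-V_m$, which again solve $LV=0$, gives $\|V_k-V_m\|_{C(\bar\Omega)}\le\|\varphi_k-\varphi_m\|_{C(\partial\Omega)}$, so $(V_k)$ is Cauchy in $C(\bar\Omega)$ and converges uniformly to some $V\in C(\bar\Omega)$ with $V|_{\partial\Omega}=\varphi$. Standard interior estimates for the elliptic equation $LV=0$ (or, equivalently, the locally uniform convergence of the Bessel series and its derivatives) upgrade this to convergence in $C^2$ on every compact subset of $\Omega$, whence $LV=0$ in $\Omega$ in the classical sense. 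This produces the desired classical solution.

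The step I expect to be the main obstacle is the attainment of the boundary data for merely continuous $\varphi$: the interior estimates give regularity only away from $\partial\Omega$, and continuity up to the boundary cannot be read off term by term from the series. It is precisely the global comparison estimate $\|V_k-V_m\|_{C(\bar\Omega)}\le\|\varphi_k-\varphi_m\|_{C(\partial\Omega)}$ that makes the approximation uniform on all of $\bar\Omega$ and thereby transfers the boundary values to the limit. The same maximum principle also yields uniqueness of the solution, a fact that will be used later in the proof of Theorem~\ref{THM: Strip}.
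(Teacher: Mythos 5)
Your proposal is correct, but it takes a genuinely different route from the paper at the decisive step. The paper also separates variables in polar coordinates and uses the same monotonicity bound $I_n(r\rho)/I_n(r\tilde\rho)\le(\rho/\tilde\rho)^n$ (proved by comparing the power series term by term, exactly as in your observation that $I_\nu(z)/z^\nu$ has positive Taylor coefficients), but it applies the Fourier--Bessel series \emph{directly} to the continuous boundary function: the Fourier coefficients are then merely bounded, and the geometric factors $(\rho/\tilde\rho)^n$ are used to bound $V$, $\partial^2 V/\partial\theta^2$, $\partial V/\partial\rho$, etc., by series of the form $C\sum_n n^2(\rho/\tilde\rho)^n$. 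This gives interior $C^2$ regularity, but those bounds degenerate as $\rho\to\tilde\rho$, so the attainment of the boundary values for merely continuous data --- precisely the point you flagged as the main obstacle --- is asserted rather than proved in the paper. Your argument fills exactly that gap: you run the series only for trigonometric-polynomial data (where it is a finite sum and boundary continuity is trivial), and then transfer to general continuous $\varphi$ via the weak maximum principle for $\Delta-r^2$ (legitimate since the zeroth-order coefficient $-r^2$ is nonpositive), which gives the sup-norm comparison $\|V_k-V_m\|_{C(\bar\Omega)}\le\|\varphi_k-\varphi_m\|_{C(\partial\Omega)}$, uniform convergence on $\bar\Omega$, and then $C^2_{\rm loc}$ convergence by interior Schauder estimates. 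What each approach buys: the paper's route stays entirely within special-function estimates and produces an explicit series representation of the solution for general data, in the spirit of the rest of the paper; your route is more robust, rigorously settles continuity up to the boundary, and yields uniqueness in the disk as a free by-product, at the cost of invoking the maximum principle and interior elliptic estimates, which the paper nowhere uses.
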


\begin{proof}
In the polar coordinates $x =\rho \cos\theta$, $y = \rho \sin\theta$, the problem (\ref{Eq: Dirichlet in the round}) for $V(\rho, \theta)$ has the form
\begin{equation}
\label{Eq: Dirichlet in the round_ POLAR}
    \left\{ {\begin{array}{l}
        \diffp[2]{V}{\rho} + \frac{1}{\rho} \diffp{V}{\rho} + \frac{1}{\rho^2} \diffp[2]{V}{\theta} = 0, 
        \\
        V(\tilde{\rho}, \theta) = \psi(\theta). \\
    \end{array}} \right.
\end{equation}
where $\psi(\theta) = \varphi(x,y)|_{(x,y) \in \partial \Omega}$. We separate the variables and solve the Sturm--Liouville problem
\[
V(\rho,\theta) = \sum_{n=0}^{\infty} P_n(\rho) Q_n(\theta), \quad\quad \frac{P''_n +P'_n/\rho -r^2 P_n}{P_n /\rho^2} =\frac{Q''_n}{Q_n} = n^2, \quad n \geq 0.
\]
It is well known that the solution can be represented as follows
\begin{eqnarray*}
    V(\rho,\theta) &=& \frac{2}{\pi} \frac{I_0(r \rho)}{I_0(r\tilde{\rho})} \int_{-\pi}^{\pi} \psi(\xi) d\xi \\
    &&+\frac{1}{\pi} \sum_{n=0}^{\infty} \frac{I_n(r \rho)}{I_n(r\tilde{\rho})} \lbr( \sin{n \theta} \int_{-\pi}^{\pi} \sin(n\xi)\psi(\xi) d\xi +
    \cos{n \theta} \int_{-\pi}^{\pi} \cos(n\xi)\psi(\xi) d\xi\rbr)
\end{eqnarray*}
where $I_n(z)$ is the modified Bessel function (see \cite{Abramowitz}).

It is easy to show that $I_n(z)$ has the following properties:
\beq
\frac{I_n(z)}{I_n(\tilde{z})} \leq \left( \frac{z}{\tilde{z}} \right)^n,
\qquad \frac{I_{n+1}(z)}{I_n(\tilde{z})} \leq \left( \frac{z}{\tilde{z}} \right)^n \frac{z}{2},
\qquad \frac{I_{n-1}(z)}{I_n(\tilde{z})} \leq \left( \frac{z}{\tilde{z}} \right)^n \frac{2}{z}.
\enq
Indeed, using the  definition of the modified Bessel function $I_n (z)$, see \cite{Abramowitz}, we have:
\[
\frac{I_{n}(z)}{I_{n}(\tilde{z})} = \frac{\sum_{k=0}^{\infty} \frac{1}{k!(n+k)!}\lbr(\frac{z}{2}\rbr)^{n+2k}}
{\sum_{k=0}^{\infty} \frac{1}{k!(n+k)!}\lbr(\frac{\tilde{z}}{2}\rbr)^{n+2k}}  \leq
\lbr(\frac{z}{\tilde{z}} \rbr) ^n \frac{\sum_{k=0}^{\infty} \frac{1}{k!(n+k)!}\lbr(\frac{z}{2}\rbr)^{2k}}
{\sum_{k=0}^{\infty} \frac{1}{k!(n+k)!}\lbr(\frac{z}{2}\rbr)^{2k}} \leq
\lbr(\frac{z}{z} \rbr) ^n,
\]
\[
\frac{I_{n+1}(z)}{I_{n}(\tilde{z})} = \frac{\sum_{k=0}^{\infty} \frac{1}{k!(n+k+1)!}\lbr(\frac{z}{2}\rbr)^{n+1+2k}}
{\sum_{k=0}^{\infty} \frac{1}{k!(n+k)!}\lbr(\frac{\tilde{z}}{2}\rbr)^{n+2k}}  \leq
\lbr(\frac{z}{\tilde{z}} \rbr) ^n \frac{z}{2} \frac{\sum_{k=0}^{\infty} \frac{1}{k!(n+k)!}\lbr(\frac{z}{2}\rbr)^{2k}}
{\sum_{k=0}^{\infty} \frac{1}{k!(n+k)!}\lbr(\frac{z}{2}\rbr)^{2k}} \leq
\lbr(\frac{z}{z} \rbr) ^n \frac{z}{2},
\]
\[
\frac{I_{n-1}(z)}{I_{n}(\tilde{z})} = \frac{\sum_{k=0}^{\infty} \frac{1}{k!(n+k-1)!}\lbr(\frac{z}{2}\rbr)^{n-1+2k}}
{\sum_{k=0}^{\infty} \frac{1}{k!(n+k)!}\lbr(\frac{\tilde{z}}{2}\rbr)^{n+2k}}  \leq
\lbr(\frac{z}{\tilde{z}} \rbr) ^n \frac{2}{z} \frac{\sum_{k=0}^{\infty} \frac{1}{k!(n+k-1)!}\lbr(\frac{z}{2}\rbr)^{2k}}
{\sum_{k=0}^{\infty} \frac{1}{k!(n+k-1)!}\lbr(\frac{z}{2}\rbr)^{2k}} \leq
\lbr(\frac{z}{z} \rbr) ^n \frac{2}{z}.
\]

From here we immediately obtain the bounds for  $V(\rho,\theta)$ and $\diffp[2]{V(\rho,\theta)}{\theta}$:
\[
|V(\rho,\theta)| \leq C\sum_{n=0}^{\infty} \lbr( \frac{\rho}{\tilde{\rho}}\rbr)^n \leq {\rm Const}, \quad
\lbr|\diffp[2]{V(\rho,\theta)}{\theta} \rbr| \leq C\sum_{n=0}^{\infty} n^2 \lbr( \frac{\rho}{\tilde{\rho}}\rbr)^n \leq {\rm Const}.
\]

For the derivative $\partial V / \partial \rho$ we use the  properties of the modified Bessel functions, see \cite{Abramowitz}:
\[
I_{n-1} (r \rho) + I_{n+1} (r \rho) = 2\frac{d}{r d\rho} \lbr( I_n(r \rho) \rbr), \quad I_{n-1} (r \rho) - I_{n+1} (r \rho) = 2\frac{n}{r \rho} I_n(r \rho).
\]

Thus, the derivative $\partial V / \partial \rho$ is bounded because
\[
\lbr| \diffp{V}{\rho} (\rho, \theta) \rbr| \leq C \left( \sum_{n=0}^{\infty} \frac{I_{n-1}(r \rho)}{I_n(r \tilde{\rho})} +
\sum_{n=0}^{\infty} \frac{I_{n+1}(r \rho)}{I_n(r \tilde{\rho})} \right) \leq {\rm Const}.
\]
These relations mean that $V$, $\partial V  / \partial \theta $, and $\partial V /  \partial \rho $ are continuous functions. The arguments for the second derivative $ \partial^2 V / \partial \rho ^2 $ are similar.
\end{proof}
\par
Now we can complete the proof of Proposition 2. We take an arbitrary point $(x,y)$ from ${\rm int \,}\Pi^{\pi}$ and consider a disk centered in $(x,y)$ and contained in ${\rm int \,}\Pi^{\pi}$. Due to  Proposition 1 the convolution (\ref{Eq: Convolution for strip}) is continuous on the boundary of this disk. By Lemma \ref{Lemma 1} (\ref{Eq: Convolution for strip}) it is twice differentiable  at any point of its interior. Hence, the convolution (\ref{Eq: Convolution for strip}) is a twice differentiable function in the interior of $\Pi^{\pi}$.

\subsubsection{Proposition 3}
First, we check the boundary conditions
\[
V(x,y)|_{y=0} =\int_{\mathbb{R}} \varphi(u) G^{\pi}(x-u,0)du= \varphi(x), \quad V(x,y)|_{y=\pi} =\int_{\mathbb{R}} \varphi(u) G^{\pi}(x-u,\pi)du=0.
\]
It is easy to show that the Green function is a weak solution, i.e. we understand the equality $LG^{\pi} = 0$ in the following sense:
\[
LG^{\pi}= 0\ \Leftrightarrow\ \left \{\lbr(G^{\pi}, Lz \rbr)_{\Omega} = \iint_{\Omega} G^{\pi} Lz dxdy =0,
\quad \forall z \in \dot{C}^2, \quad \forall \Omega \subsetneq \Pi^\pi \right\},
\]
where $\dot{C}^2$ is the class of twice continuously differentiable finite functions (the class of test functions) and $\Omega$ is a compact sub-domain. Hence, for the convolution $\varphi * G^{\pi}$ we have
\[
\lbr(\varphi * G^{\pi}, Lz \rbr)_{\Omega} = \iint_{\Omega} \lbr(\varphi * G^{\pi}\rbr) Lz dxdy  = \varphi  * \iint_{\Omega} G^{\pi} Lz dxdy =0.
\]
It is well known that the elliptic operator is self-adjoint. Therefore

\[
\lbr((L(\varphi * G^{\pi}), \varphi \rbr)_{\Omega}  = \lbr(\varphi * G^{\pi}, L^*z \rbr)_{\Omega}
=\lbr(\varphi * G^{\pi}, L^*z \rbr)_{\Omega} = \lbr(\varphi * G^{\pi}, Lz \rbr)_{\Omega} =0.
\]
Due to Propositions 1 and 2 function the $L(\varphi * G^{\pi})$ is continuous and, therefore, bounded on the compact  $\Omega$. Hence, $ L(\varphi * G^{\pi}) =0$ because $\Omega$ is arbitrary.

\subsubsection{Proposition 4}
Suppose that we have two different bounded functions $V$ and $\tilde{V}$, both solving the problem (\ref{Eq: Main Strip}). Their difference $V-\tilde{V}$ is a bounded function solving the homogeneous problem
\begin{equation}
\label{Eq: Homogeneous Strip}
    \left\{ {\begin{array}{l}
        LV_0(x,y) = 0, \quad (x,y) \in {\rm int \,}\Pi^\pi, \\
        V_0(x,0) = 0, \quad x \in \mathbb{R},  \\
        V_0(x,\pi) = 0, \quad x \in \mathbb{R}.  \\
    \end{array}} \right.
\end{equation}

\begin{lem}
The solution of the problem (\ref{Eq: Homogeneous Strip}) is the infinite sum
\beq
\label{eq: solution of homogeneous strip}
V_0(x,y) =\sum_{k=0}^{\infty} \lbr(A_k e^{\sqrt{r^2+k^2}x} \sin ky + B_k e^{ - \sqrt{r^2+k^2}x} \sin ky \rbr).
\enq
\end{lem}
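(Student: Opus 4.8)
The plan is to establish the representation by separation of variables, made rigorous through the Fourier sine expansion in the $y$ variable. The decisive structural fact is that the boundary conditions $V_0(x,0)=V_0(x,\pi)=0$ single out the system $\{\sin ky\}_{k\ge 1}$, which is a complete orthogonal basis of $L^2[0,\pi]$ and is precisely the eigenbasis of $-\partial_{yy}$ under Dirichlet conditions on $[0,\pi]$, with eigenvalues $k^2$. The $k=0$ term in \rf{eq: solution of homogeneous strip} vanishes identically since $\sin(0\cdot y)=0$, so it may be ignored.

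First I would fix $x$ and introduce the Fourier sine coefficients
\[
c_k(x)=\frac{2}{\pi}\int_0^\pi V_0(x,y)\sin ky\,dy,\qquad k\ge 1,
\]
so that $V_0(x,y)=\sum_{k\ge 1}c_k(x)\sin ky$ for each fixed $x$ by completeness of the sine system. Next I would derive an ordinary differential equation for each $c_k$. Differentiating twice under the integral sign and substituting $V_{0,xx}=r^2V_0-V_{0,yy}$ from the equation $LV_0=0$ gives
\[
c_k''(x)=r^2c_k(x)-\frac{2}{\pi}\int_0^\pi V_{0,yy}(x,y)\sin ky\,dy.
\]
Integrating the remaining integral by parts twice in $y$, the boundary terms at $y=0$ and $y=\pi$ vanish: the first integration by parts uses $\sin 0=\sin k\pi=0$, and the second uses the Dirichlet conditions $V_0(x,0)=V_0(x,\pi)=0$. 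What survives is exactly $-k^2\cdot\tfrac{\pi}{2}\,c_k(x)$, whence
\[
c_k''(x)=(r^2+k^2)\,c_k(x).
\]
Solving this constant-coefficient ODE yields $c_k(x)=A_k e^{\sqrt{r^2+k^2}\,x}+B_k e^{-\sqrt{r^2+k^2}\,x}$, and reassembling the series produces the claimed representation \rf{eq: solution of homogeneous strip}.

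The main obstacle is analytic justification rather than the formal computation. One must verify that $V_0$ is regular enough to differentiate under the integral sign and to integrate by parts in $y$; this smoothness is available on compact subdomains of ${\rm int\,}\Pi^\pi$ from Proposition 2, applied on closed rectangles exhausting the strip. One must also check that the sine series converges and may be differentiated term by term so that the reconstructed sum is genuinely the classical solution; since $y\mapsto V_0(x,y)$ is twice differentiable and satisfies the Dirichlet conditions, its sine series converges uniformly, which suffices. A final point worth flagging is that $V_0$ is only assumed bounded and classical, \emph{not} decaying as $|x|\to\infty$; this is exactly why both the growing and the decaying exponentials $e^{\pm\sqrt{r^2+k^2}\,x}$ must be retained here, the boundedness being exploited only in the subsequent step of the uniqueness argument to force $A_k=B_k=0$.
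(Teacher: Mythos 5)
Your proof is correct, but it is structured differently from the paper's. The paper's own argument is a formal separation-of-variables ansatz: it substitutes $V_0(x,y)=X(x)Y(y)$ into $LV_0=0$, obtains the Sturm--Liouville problem $\frac{X''-r^2X}{X}=\frac{-Y''}{Y}=\lambda^2$, reads off the spectrum $\lambda_k=k$ and the eigenfunctions $e^{\pm\sqrt{r^2+k^2}\,x}\sin ky$, and stops there---implicitly assuming that the general solution is a superposition of such products. What you do instead is project an \emph{arbitrary} solution onto the Dirichlet sine basis, derive the ODE $c_k''=(r^2+k^2)c_k$ for the coefficients $c_k(x)=\frac{2}{\pi}\int_0^\pi V_0(x,y)\sin ky\,dy$, and reassemble. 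This is the stronger statement: the paper shows that the listed products \emph{are} solutions, whereas the lemma (and the uniqueness argument that rests on it) needs that \emph{every} bounded classical solution of \rf{Eq: Homogeneous Strip} has the form \rf{eq: solution of homogeneous strip}, which is exactly the completeness step your Fourier-coefficient argument supplies and the paper omits. In that sense your route, while resting on the same eigensystem $\{\sin ky\}$, repairs a genuine logical gap in the published proof.

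One caveat on your analytic justification: the regularity you invoke is not quite the right citation. Proposition 2 of the paper concerns the specific convolution $\varphi * G^\pi$, not an arbitrary solution $V_0$; and interior smoothness on compact subrectangles is not by itself enough, because your $y$-integrals run over the full closed interval $[0,\pi]$, so the differentiation under the integral sign and the two integrations by parts need control of $V_{0,y}$ and $V_{0,yy}$ up to the boundary $y=0,\pi$. This is fixable by standard means---either integrate over $[\delta,\pi-\delta]$ and let $\delta\to 0$, or note that since $V_0$ vanishes on the flat boundary lines and satisfies $\Delta V_0 - r^2V_0=0$, the odd (Schwarz) reflection of $V_0$ across $y=0$ and $y=\pi$ satisfies the same equation, so $V_0$ extends smoothly past the boundary and all your manipulations are legitimate---but as written this step is asserted rather than proved.
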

It is easy to show that all summands in (\ref{eq: solution of homogeneous strip}) are unbounded functions except zero (the eigenfunction corresponding to $k=0$). Hence, $V = \tilde{V}$.
\begin{proof}
We separate the variables $V_0 (x,y)=X(x)Y(y)$ and solve the Sturm--Liouville problem:
\[
\label{Eq: Strurm - Liouville Uniqueness}
YX''+Y''X-r^2XY = 0, \qquad \frac{X''-r^2X}{X}=\frac{-Y''}{Y}=\lambda^2.
\]
Spectrum  is $\lambda = \lambda_k = k$, $k \in \mathbb{N} $ and the eigenfunctions are $X_k(x)Y_k(y)=e^{\pm \sqrt{r^2+k^2}x} \sin ky$.
\end{proof}

\subsection{Proof of Theorem \ref{THM: Strip 2}}
We put $\xi= x-u$ and consider the part of the Green function $G^\pi$ given by the series:
\beq
\label{Eq: R series}
R(\xi,y,r) =\sum_{k=0}^{\infty} \frac{k \sin ky}{\sqrt{k^2+r^2} }e^{-\xi\sqrt{k^2+r^2}}.
\enq
In the  case $r=0$, see \cite{Gradshteyn} 1.445.1, we have:
\beq
\label{Eq: R series r =0}
R(\xi,y,0) = \frac{\sin y}{2\lbr(\cosh\xi-\cos y \rbr)}, \quad \diffp{R(\xi,y,0)}{\xi} = T(\xi,y,0) =  -\frac{\sin y \sinh \xi }{2 \lbr(\cosh\xi -\cos y  \rbr)^2}.
\enq
Using the formula
\[
\int_{1}^{\infty} e^{-k \xi t } J_0 (\xi r \sqrt{t^2-1}) dt = \frac{e^{- \xi \sqrt{k^2 + r^2}}}{\xi \sqrt{k^2+r^2}},
\]
 see \cite{Gradshteyn}, 6.646(1)), we have
\[
    R(\xi,y,r)= \xi \sum_{k=0}^{\infty} k \sin ky \int_{1}^{\infty} e^{-k \xi t} J_0 \lbr( \xi r \sqrt{t^2-1} \rbr) dt.
\]
The change of the  integration and the summation yields
\[
   R(\xi,y,r) = -\xi \int_{1}^{\infty} J_0 \lbr( \xi r \sqrt{t^2-1} \rbr) T(\xi t, y,0) dt =
   -\int_{1}^{\infty} J_0 \lbr( \xi r \sqrt{t^2-1} \rbr) d\left(R(\xi t, y,0) \right).
\]
Using the second formula in (\ref{Eq: R series r =0}), we finish the proof.

\subsection{Proof of Theorem \ref{THM: Plane}}
The proof consists of four parts:

\par
1.    The convolution product (\ref{Eq: Convolution for HalfPlane}) is a continuous and bounded function in the domain $\Pi^{\infty}$ for all bounded boundary functions $\varphi$ from  $H^\lambda$.
\par
2.    The convolution product (\ref{Eq: Convolution for HalfPlane}) has the continuous second-order partial derivatives in $\Pi^{\infty}/\partial{\Pi^{\infty}} $ for all bounded boundary functions $\varphi$ from $H^\lambda$.
\par
3.    The convolution product (\ref{Eq: Convolution for HalfPlane}) is the solution of the problem (\ref{Eq: Main HalfPlane})
\par
4.    The  solution of the problem (\ref{Eq: Main HalfPlane}) is unique.

We omit the proofs of Propositions 2 and 3 as the arguments are similar to those in the previous case.

\subsubsection{Proposition 1}
We make changes in the variables similar to those used above and get that

\[
G^{\infty}*\varphi =
\frac{1}{\pi} \iint_{\mathbb{R}^2_{+}}\lbr(\varphi(x+\eta) - \varphi(x-\eta) \rbr) \frac{\xi \sin(\xi y) e^{-\eta \sqrt{r^2+\xi^2}}}{\sqrt{r^2+\xi^2}} d\eta d\xi+ \Theta(-y)\varphi(x).
\]
Using the bounds relations we infer the absolute convergence of the integral:
\[
\iint_{\mathbb{R}^2_{+}} \lbr|\varphi(x+\eta) - \varphi(x-\eta) \rbr| \frac{\xi |\sin(\xi y)| e^{-\eta \sqrt{r^2+\xi^2}}}{\sqrt{r^2+\xi^2}} d\eta d\xi \leq
\]
\[
\leq
\left\{ {\begin{array}{l}
        |\sin(ky)| \leq 1; \\
        |\varphi(x+\xi) - \varphi(x-\xi)| \leq C\xi^\lambda  \\
    \end{array} }
\rbr\} \leq  C\iint_{\mathbb{R}^2_{+}}   \eta^{\lambda} \frac{\xi e^{\eta \sqrt{r^2+\xi^2}}}{\sqrt{r^2+\xi^2}} d\eta d\xi \leq
\]
\[
\leq
\left\{ {\begin{array}{l}
       \int_0^{\infty} x^{\nu-1} e^{-\mu x} dx  = \frac{\Gamma(\nu)}{\mu^\nu} , \\
        \quad { \rm Re \,} \mu, { \rm Re \,} \nu >0  \\
    \end{array}} \right\}
\leq C \Gamma(1+\lambda)\int_{0}^{\infty} \frac{\xi d\xi}{\lbr( r^2 + \xi^2 \rbr)^{1+\lambda/2}} \leq C \int_{0}^{\infty} \frac{d\xi}{\xi^{1+\lambda/2}} \leq C.
\]
\subsubsection{Proposition 4}
As in  the previous case we suppose that there are two different bounded solutions $V$ and $\tilde{V}$ of the problem (\ref{Eq: Main HalfPlane}). Then we have the homogeneous boundary problem for the difference $V_0 = V-\tilde{V}$.

\begin{equation}
\label{Eq: Homogeneous Plane}
    \left\{ {\begin{array}{l}
        \Delta V_0 - r^2 V_0 = 0, \quad (x,y) \in {\rm int \,} \Pi^{\infty}, \\
        V_0(x,0) = 0, \quad x \in \mathbb{R},  \\
        V_0(x,\infty) = 0, \quad x \in \mathbb{R}.  \\
    \end{array}} \right.
\end{equation}
We show that the solution of  (\ref{Eq: Homogeneous Plane}) can be represented in terms of the modified Bessel functions. After that we use the asymptotic of modified Bessel functions and show that only the zero function solves the problem (\ref{Eq: Homogeneous Plane}) in the class of bounded functions. The proof of the lemma below completes the proof of  Proposition 4.
\begin{lem}
The solution of the problem (\ref{Eq: Homogeneous Plane}) is the infinite sum
\beq
\label{Eq: solution of homogeneous plane}
V_0(x,y) = \sum_{n=0}^{\infty} \left( A_n I_n (r\nu) \sin ny   + B_n K_n (r\nu) \sin ny  \right).
\enq
This sum does not have bounded summands for $n\ge 1$.
\end{lem}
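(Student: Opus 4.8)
The plan is to reduce \rf{Eq: Homogeneous Plane} to an ordinary differential equation by separating variables in polar coordinates adapted to the upper half--plane, and then to use the asymptotics of the modified Bessel functions to show that no nontrivial summand with $n\ge 1$ can be bounded.

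First I would pass to the polar coordinates $\nu=\sqrt{x^2+y^2}$, $\theta\in[0,\pi]$ (so $x=\nu\cos\theta$, $y=\nu\sin\theta$), in which the boundary $y=0$ becomes the two rays $\theta=0,\pi$ and the condition $V_0|_{y=\infty}=0$ becomes decay as $\nu\to\infty$. The equation $\Delta V_0-r^2V_0=0$ takes the form $\partial_\nu^2V_0+\nu^{-1}\partial_\nu V_0+\nu^{-2}\partial_\theta^2V_0-r^2V_0=0$. Writing $V_0=P(\nu)Q(\theta)$ and separating, the angular factor solves the Sturm--Liouville problem $Q''+n^2Q=0$, $Q(0)=Q(\pi)=0$, with discrete spectrum $n\in\mathbb{N}$ and eigenfunctions $Q_n=\sin n\theta$ (the value $n=0$ giving only the trivial eigenfunction), while the radial factor satisfies the modified Bessel equation
\[
P''+\frac{1}{\nu}P'-\lbr(r^2+\frac{n^2}{\nu^2}\rbr)P=0,
\]
whose fundamental system is $\{I_n(r\nu),K_n(r\nu)\}$. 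Superposition then yields the representation \rf{Eq: solution of homogeneous plane}.

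To turn this into a rigorous argument I would, for a given bounded solution $V_0$, set $c_n(\nu)=\frac{2}{\pi}\int_0^\pi V_0(\nu,\theta)\sin n\theta\,d\theta$; the interior regularity supplied by Lemma \ref{Lemma 1} justifies differentiating under the integral sign, so each $c_n$ solves the radial equation above and hence $c_n(\nu)=A_nI_n(r\nu)+B_nK_n(r\nu)$. Boundedness of $V_0$ gives $|c_n(\nu)|\le 2\sup|V_0|$ uniformly in $\nu\in(0,\infty)$. The decisive step is the asymptotic dichotomy: since $I_n(r\nu)\sim e^{r\nu}/\sqrt{2\pi r\nu}$ grows exponentially as $\nu\to\infty$, boundedness at infinity forces $A_n=0$; and since $K_n(r\nu)\sim\tfrac{1}{2}(n-1)!\,(2/r\nu)^{n}$ blows up at the origin for every $n\ge 1$, boundedness near $\nu=0$ (the origin lies on $\partial\Pi^\infty$) forces $B_n=0$. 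Hence $c_n\equiv 0$ for all $n\ge 1$, the $n=0$ term vanishes because $\sin n\theta\equiv 0$ there, and therefore $V_0\equiv 0$.

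The main obstacle will be the simultaneous control at the two ends of the radial interval: whereas in the strip case a single exponential--growth estimate sufficed, here one must rule out every bounded combination $A_nI_n+B_nK_n$ by playing the exponential growth of $I_n$ at $\nu=\infty$ against the power singularity of $K_n$ at $\nu=0$, so that no cancellation is possible. The only other point requiring care is the justification that the Fourier coefficients $c_n(\nu)$ genuinely satisfy the radial ODE, which is precisely where the classical solvability from Lemma \ref{Lemma 1} enters.
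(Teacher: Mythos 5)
Your proposal follows essentially the same route as the paper: polar coordinates on the half-plane, separation of variables yielding the angular eigenfunctions $\sin n\theta$ and the modified Bessel radial equation, and the opposing asymptotics of $I_n$ at infinity and of $K_n$ at the origin to exclude every nonzero summand with $n\ge 1$. The only difference is that you make the representation step rigorous by projecting a given bounded solution onto Fourier modes $c_n(\nu)=\frac{2}{\pi}\int_0^\pi V_0(\nu,\theta)\sin n\theta\,d\theta$ and showing each coefficient solves the radial ODE, whereas the paper asserts the expansion \rf{Eq: solution of homogeneous plane} directly from formal separation of variables; this is a tightening of the same argument rather than a different proof.
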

\begin{proof}
In the polar coordinates $x=\nu \cos\varphi $ and $y = \nu \sin \varphi$ we have the following Sturm--Liouville problem in separated variables $V=N(\nu)\Phi(\varphi)$:
\[
\frac{N''+N'/\nu -r^2 N}{N/\nu^2} = -\frac{\Phi''}{\Phi} = \lambda^2.
\]
For the phase component $\Phi$ we have the spectral problem
\begin{equation}
\label{Eq: Plane spectral angle}
    \left\{ {\begin{array}{l}
        \Phi'' +\lambda^2 \Phi= 0, \\
        \Phi(0) = 0,  \\
        \Phi(\pi)=0.
    \end{array}} \right.
\end{equation}
The eigenfunctions are $\Phi_k(\varphi) =\sin k\varphi $ and $\lambda= \lambda_k=k$ with $k \in \mathbb{N}$. For the radial component we have the modified Bessel equation
\[
N''_k+\frac{N'_k}{\nu}- r^2N_k -\frac{k^2}{\nu^2}N_k=0.\]
Therefore, the solution of the problem (\ref{Eq: Homogeneous Plane}) can be represented in the form from (\ref{Eq: solution of homogeneous plane}). The modified Bessel functions have
the well-known asymptotic, see \cite{Abramowitz},
\[
K_n(z) \thicksim \infty, \quad I_n(z) \thicksim 0, \quad (z \rightarrow 0),
\qquad
K_n(z) \thicksim 0, \quad I_n(z) \thicksim \infty, \quad (z \rightarrow \infty).
\]
Hence, in the sum we do not have bounded summands for $n\ge 1$.
\end{proof}

\

\smallskip
\noindent
{\bf Acknowledgement.} The research is funded by the grant of the Government of Russian Federation  $n^\circ$14.А12.31.0007.

\end{document}